\begin{document}
\baselineskip = 16pt

\newcommand \ZZ {{\mathbb Z}}
\newcommand \NN {{\mathbb N}}
\newcommand \RR {{\mathbb R}}
\newcommand \PR {{\mathbb P}}
\newcommand \AF {{\mathbb A}}
\newcommand \GG {{\mathbb G}}
\newcommand \QQ {{\mathbb Q}}
\newcommand \CC {{\mathbb C}}
\newcommand \bcA {{\mathscr A}}
\newcommand \bcC {{\mathscr C}}
\newcommand \bcD {{\mathscr D}}
\newcommand \bcF {{\mathscr F}}
\newcommand \bcG {{\mathscr G}}
\newcommand \bcH {{\mathscr H}}
\newcommand \bcM {{\mathscr M}}
\newcommand \bcI {{\mathscr I}}
\newcommand \bcJ {{\mathscr J}}
\newcommand \bcK {{\mathscr K}}
\newcommand \bcL {{\mathscr L}}
\newcommand \bcO {{\mathscr O}}
\newcommand \bcP {{\mathscr P}}
\newcommand \bcQ {{\mathscr Q}}
\newcommand \bcR {{\mathscr R}}
\newcommand \bcS {{\mathscr S}}
\newcommand \bcV {{\mathscr V}}
\newcommand \bcU {{\mathscr U}}
\newcommand \bcW {{\mathscr W}}
\newcommand \bcX {{\mathscr X}}
\newcommand \bcY {{\mathscr Y}}
\newcommand \bcZ {{\mathscr Z}}
\newcommand \goa {{\mathfrak a}}
\newcommand \gob {{\mathfrak b}}
\newcommand \goc {{\mathfrak c}}
\newcommand \gom {{\mathfrak m}}
\newcommand \gon {{\mathfrak n}}
\newcommand \gop {{\mathfrak p}}
\newcommand \goq {{\mathfrak q}}
\newcommand \goQ {{\mathfrak Q}}
\newcommand \goP {{\mathfrak P}}
\newcommand \goM {{\mathfrak M}}
\newcommand \goN {{\mathfrak N}}
\newcommand \uno {{\mathbbm 1}}
\newcommand \Le {{\mathbbm L}}
\newcommand \Spec {{\rm {Spec}}}
\newcommand \Gr {{\rm {Gr}}}
\newcommand \Pic {{\rm {Pic}}}
\newcommand \Jac {{{J}}}
\newcommand \Alb {{\rm {Alb}}}
\newcommand \Corr {{Corr}}
\newcommand \Chow {{\mathscr C}}
\newcommand \Sym {{\rm {Sym}}}
\newcommand \Prym {{\rm {Prym}}}
\newcommand \cha {{\rm {char}}}
\newcommand \eff {{\rm {eff}}}
\newcommand \tr {{\rm {tr}}}
\newcommand \Tr {{\rm {Tr}}}
\newcommand \pr {{\rm {pr}}}
\newcommand \ev {{\it {ev}}}
\newcommand \cl {{\rm {cl}}}
\newcommand \interior {{\rm {Int}}}
\newcommand \sep {{\rm {sep}}}
\newcommand \td {{\rm {tdeg}}}
\newcommand \alg {{\rm {alg}}}
\newcommand \im {{\rm im}}
\newcommand \gr {{\rm {gr}}}
\newcommand \op {{\rm op}}
\newcommand \Hom {{\rm Hom}}
\newcommand \Hilb {{\rm Hilb}}
\newcommand \Sch {{\mathscr S\! }{\it ch}}
\newcommand \cHilb {{\mathscr H\! }{\it ilb}}
\newcommand \cHom {{\mathscr H\! }{\it om}}
\newcommand \colim {{{\rm colim}\, }} 
\newcommand \End {{\rm {End}}}
\newcommand \coker {{\rm {coker}}}
\newcommand \id {{\rm {id}}}
\newcommand \van {{\rm {van}}}
\newcommand \spc {{\rm {sp}}}
\newcommand \Ob {{\rm Ob}}
\newcommand \Aut {{\rm Aut}}
\newcommand \cor {{\rm {cor}}}
\newcommand \Cor {{\it {Corr}}}
\newcommand \res {{\rm {res}}}
\newcommand \red {{\rm{red}}}
\newcommand \Gal {{\rm {Gal}}}
\newcommand \PGL {{\rm {PGL}}}
\newcommand \Bl {{\rm {Bl}}}
\newcommand \Sing {{\rm {Sing}}}
\newcommand \spn {{\rm {span}}}
\newcommand \Nm {{\rm {Nm}}}
\newcommand \inv {{\rm {inv}}}
\newcommand \codim {{\rm {codim}}}
\newcommand \Div{{\rm{Div}}}
\newcommand \CH{{\rm{CH}}}
\newcommand \sg {{\Sigma }}
\newcommand \DM {{\sf DM}}
\newcommand \Gm {{{\mathbb G}_{\rm m}}}
\newcommand \tame {\rm {tame }}
\newcommand \znak {{\natural }}
\newcommand \lra {\longrightarrow}
\newcommand \hra {\hookrightarrow}
\newcommand \rra {\rightrightarrows}
\newcommand \ord {{\rm {ord}}}
\newcommand \Rat {{\mathscr Rat}}
\newcommand \rd {{\rm {red}}}
\newcommand \bSpec {{\bf {Spec}}}
\newcommand \Proj {{\rm {Proj}}}
\newcommand \pdiv {{\rm {div}}}
\newcommand \wt {\widetilde }
\newcommand \ac {\acute }
\newcommand \ch {\check }
\newcommand \ol {\overline }
\newcommand \Th {\Theta}
\newcommand \cAb {{\mathscr A\! }{\it b}}

\newenvironment{pf}{\par\noindent{\em Proof}.}{\hfill\framebox(6,6)
\par\medskip}

\newtheorem{theorem}[subsection]{Theorem}
\newtheorem{conjecture}[subsection]{Conjecture}
\newtheorem{proposition}[subsection]{Proposition}
\newtheorem{lemma}[subsection]{Lemma}
\newtheorem{remark}[subsection]{Remark}
\newtheorem{remarks}[subsection]{Remarks}
\newtheorem{definition}[subsection]{Definition}
\newtheorem{corollary}[subsection]{Corollary}
\newtheorem{example}[subsection]{Example}
\newtheorem{examples}[subsection]{examples}
\title[Bloch's conjecture]{Bloch's conjecture on certain surfaces of general type  with $p_g=q=0, K^2=3$ and with an involution}
\author{Kalyan Banerjee}

\address{SRM University AP, India}

\email{kalyan.ba@srmap.edu.in}

\begin{abstract}
In this short note, we prove that an involution on certain examples of surfaces of general type with $p_g=0=q, K^2=3$, acts as identity on the Chow group of zero cycles of the relevant surface. In particular, we consider examples of such surfaces when the quotient is birational to an Enriques surface or to a surface of Kodaira dimension one and show that the Bloch conjecture holds for such surfaces.
\end{abstract}
\maketitle

\section{Introduction}
In \cite{M} Mumford has proved that if the geometric genus of a  smooth algebraic surface is greater than zero then the Chow group of zero cycles on  the surface is not finite dimensional in the sense that the natural map from the symmetric powers of the surface to the Chow group is not surjective. It means that the albanese kernel is non-trivial and huge, and cannot be parametrized by an algebraic variety.  The converse is whether for a surface with geometric genus zero the Chow group of zero cycles is supported at one point, provided the irregularity of the surface is zero. This conjecture was originally made by Spencer Bloch. It is known for the surfaces  not of general type with geometric genus equal to zero due to \cite{BKL}. After that, the conjecture was verified for some examples of surfaces of general type with geometric genus zero  due to \cite{B},\cite{IM}, 
\cite{V}, \cite{VC}.

 The aim of this manuscript is to verify Bloch's conjecture for some examples of surfaces of general type with geometric genus zero. Namely, the surfaces with an involution with $K^2=3$ such that the quotient of this surface by the involution is birational to an Enriques surface or to a surface of Kodaira dimension one. Such surfaces are classified in \cite{Ri}. It is shown there if $S/i$ is not rational then it is birational to an Enriques surface or it has Kodaira dimension one. The possibilities of the ramification locus are discussed also in \cite{Ri}. Our main theorem in this paper is:

\begin{theorem}
For all surfaces of general type with $p_g=q=0$ and $K^2=3$ with an involution such that the bicanonical map is not composed with the involution and the quotient is birational to an Enriques surface or to a surface of Kodaira dimension one,  the involution acts as the identity on the group of algebraically trivial zero cycles modulo rational equivalence.
\end{theorem}

According to \cite{Ri} [section 5.2] we can construct examples of such surfaces starting from the bi-double cover constructions of a rational  surface.




The technique used to prove these results is on the same line as in \cite{Voi}, where the conjecture is verified for a K3 surface with a symplectic involution. Voisin invokes the notion of finite dimensionality in the Roitman sense \cite{R1} and demonstrates that the correspondence given by the difference of the graph of the involution and the diagonal is finite dimensional.

{\small \textbf{Acknowledgements:} The author would like to thank the hospitality of  SRM AP for hosting this project. The author is grateful to Vladimir Guletskii for many useful conversations relevant to the theme of the paper. The author thanks Claire Voisin for her advice on the theme of the paper. Finally, the author thanks Claudio Pedrini for discussions related to the theme of the paper.}

\section{Finite dimensionality in the sense of Roitman and $\CH_0$}

First, we recall the notion of finite dimensionality in the sense of Roitman \cite{R1}. Let $X$ be a smooth projective variety over $\CC$ and let $P$ be a subgroup of $\CH_0(X)$, then we will say that $P$ is finite dimensional in the Roitman sense, if there exists a smooth projective variety $W$ and a correspondence $\Gamma$ on $W\times X$, such that $P$ is contained in the set $\{\Gamma_*(w)|w\in W\}$.

The following proposition is due to Roitman and also due to Voisin. For convenience, we recall  the theorem.

Let $M,X$ be smooth projective varieties of dimension $d$ and $Z$ is a correspondence between $M$ and $X$ of codimension $d$.

\begin{theorem}\cite{Voi}
\label{theorem 1}
Assume that the image of $Z_*$ from $\CH_0(M)$ to $\CH_0(X)$ is finite-dimensional in the Roitman sense. Then the map $Z_*$ from ${\CH_0(M)}_{hom}$ to $\CH_0(X)$ factors through the albanese map from ${\CH_0(M)}_{hom}$ to $\Alb(M)$.
\end{theorem}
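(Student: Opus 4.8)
\emph{Proof strategy.} The plan is to reduce the statement to the single assertion that $Z_*$ annihilates the albanese kernel of $M$. Write $a_M\colon \CH_0(M)_{hom}\to\Alb(M)$ for the albanese map and $T(M):=\ker(a_M)$. Since $a_M$ is surjective, a homomorphism defined on $\CH_0(M)_{hom}$ factors through $a_M$ precisely when it vanishes on $T(M)$, and the factoring map $\Alb(M)\to\CH_0(X)$ is then the one induced on the quotient $\CH_0(M)_{hom}/T(M)\cong\Alb(M)$. So it suffices to fix $z\in T(M)$ and prove $Z_*(z)=0$ in $\CH_0(X)$.

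The first step is the compatibility of $Z_*$ with albanese maps. Since $Z$ has codimension $d=\dim X$, it defines a family of zero-cycles on $X$ parametrised by $M$, and regularity of the albanese of $X$ produces a homomorphism of abelian varieties $\bar Z_*\colon\Alb(M)\to\Alb(X)$ with $a_X\circ Z_*=\bar Z_*\circ a_M$ on $\CH_0(M)_{hom}$ (using that $Z_*$ sends homologically trivial zero-cycles to homologically trivial ones). Evaluating at $z$ gives $a_X(Z_*(z))=\bar Z_*(a_M(z))=0$, so $Z_*(z)$ lies in the albanese kernel $T(X):=\ker(a_X)$ of $X$. In particular $Z_*(T(M))$ is a subgroup of $\CH_0(X)$ contained in $T(X)$.

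The second step uses the hypothesis. By the definition of finite dimensionality in the Roitman sense there are a smooth projective $W$ and a correspondence $\Gamma$ on $W\times X$ with $\im(Z_*)\subseteq\{\Gamma_*(w)\mid w\in W\}$; any subgroup of $\im(Z_*)$ is then contained in the same set, so $Z_*(T(M))$ is itself finite dimensional in the Roitman sense and contained in $T(X)$. Here I would invoke Roitman's theorem \cite{R1}: a finite-dimensional subgroup of $\CH_0(X)$ that is contained in the albanese kernel $T(X)$ must be zero (equivalently, $a_X$ is injective on finite-dimensional subgroups). Applying this to $Z_*(T(M))$ forces $Z_*(T(M))=0$, hence $Z_*(z)=0$ for every $z\in T(M)$, and $Z_*\colon\CH_0(M)_{hom}\to\CH_0(X)$ factors through $a_M$ as claimed.

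The genuinely hard input, and the step I expect to be the main obstacle in a self-contained treatment, is Roitman's theorem used in the second step. Its proof requires, beyond the regularity of the albanese already used above, a careful study of the family $w\mapsto\Gamma_*(w)$: after replacing $W$ by a suitable power and passing to an appropriate symmetric product of $X$, one must show that a finite-dimensional subgroup is swept out by rational curves along which the zero-cycle class stays constant, so that constancy of the albanese image of a member propagates to rational equivalence of members. For the purposes of this note it is legitimate to quote this as a known result; the only new ingredient in the argument above is the short reduction carried out in the first two paragraphs.
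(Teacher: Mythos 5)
The paper does not actually prove this statement: it is quoted verbatim from \cite{Voi} (ultimately Roitman \cite{R1}), so there is no in-text argument to compare yours against. Your reduction is nevertheless the standard one and, as far as I can tell, correct: factoring through $a_M$ is equivalent to vanishing on $T(M)=\ker(a_M)$; the functoriality of the Albanese under codimension-$d$ correspondences places $Z_*(T(M))$ inside $T(X)$; and $Z_*(T(M))$ inherits finite dimensionality from $\im(Z_*)$, so everything rests on the quoted input that a finite-dimensional subgroup of the Albanese kernel vanishes. That is exactly where the real content of Roitman's theorem sits, and you are right to flag it as the part you are citing rather than proving.

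One small caution on the form in which you invoke that input. What Roitman's countability/stabilization argument most directly gives is that a finite-dimensional subgroup $P\subset\CH_0(X)_{hom}$ is \emph{representable}, i.e.\ a quotient of an abelian variety by a countable subgroup; to pass from ``representable and contained in $T(X)$'' to ``zero'' one normally also uses that $P=Z_*(T(M))$ is divisible (as the image of the divisible group $T(M)$) together with Roitman's torsion theorem \cite{R2}, or else quotes the factorization statement itself in the form given in \cite{Voi} or \cite{Vo}. So either cite the theorem in the factorization form (in which case your first two paragraphs are the whole proof) or add the divisibility/torsion remark to justify the precise sentence ``a finite-dimensional subgroup contained in $T(X)$ must be zero.'' With that caveat your write-up is a faithful reconstruction of the proof the paper is implicitly relying on.
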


This theorem is important because we apply to the case $M=X=S$ with zero irregularity and $Z=\Delta-Gr(i)$ being the difference of the diagonal and the graph of the involution. Then it will follow that the involution acts as an identity on $CH_0(S)_{hom}$ provided that $Z_*$ has a finite-dimensional image in the Roitman sense, since Albanese is trivial in this case.

By the classification of smooth minimal surfaces with $K^2=3$ and the bicanonical map not composed with the involution we have the following possibilities for the ramification locus $B=B'+\sum_i A_i$ where $A_i$'s are finitely many $-2$ curves and $B'$ is :

1. It is a smooth irreducible curve of self-intersection $-6$ and of genus zero.

2. It is a union of two smooth irreducible curves of self-intersection $-6,-4$ and of genus zero.

3.  It is a union of two smooth irreducible curves of self-intersection $-2$ and genus one, self-intersection $-4$ and of genus zero respectively.

4. It is a union of three smooth irreducible curves, one of self-intersection $-2$ and genus one, and two of self-intersection $-4$ and of genus zero respectively.

5. It is a smooth curve of genus one and self-intersection $-2$.

We consider the first four cases where the quotient is birational to an Enriques surface.

This is why we prove the following theorem with the above-prescribed requirement on the branch locus.

\begin{theorem}
\label{theorem2}
Let $S$ be a smooth minimal surface of general type with $p_g=0=q$, let $i:S\to S$ be an involution and $K_S^2=3$. Let $S/i$ be birational to an Enriques surface  such that the branch locus is one of the above-mentioned first four possibilities. Then the anti-invariant part
$$\CH_0(S)^-=\{z\in \CH_0(S):i_*(z)=-z\}$$
is finite-dimensional in the Roitman sense.
\end{theorem}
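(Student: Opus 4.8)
\emph{Plan.} The idea is to combine the realization of $S$ as a bidouble cover of a rational surface from \cite{Ri} with the theorem of Bloch--Kas--Lieberman \cite{BKL} for the quotients. First I would reduce to smooth models: resolving the nodes of $\Sigma:=S/i$ gives a smooth surface $\wt\Sigma$ birational to an Enriques surface, and blowing up $S$ at the isolated fixed points of $i$ gives a smooth surface $\wt S$ on which $i$ lifts to an involution $\wt\imath$ with $\wt S/\wt\imath=\wt\Sigma$; since these blow-ups leave $\CH_0$ unchanged and are equivariant, $\CH_0(S)^{-}\cong\CH_0(\wt S)^{-}$. As $\wt\Sigma$ has $p_g=0$ and Kodaira dimension $\le 1$, \cite{BKL} gives $\CH_0(\wt\Sigma)_{hom}=0$; combined with $\sigma^{*}\sigma_{*}=1+\wt\imath_{*}$ for $\sigma:\wt S\to\wt\Sigma$ and $\sigma_{*}\bigl(\CH_0(\wt S)_{hom}\bigr)\subseteq\CH_0(\wt\Sigma)_{hom}=0$, this already forces $\CH_0(S)^{-}=\CH_0(S)_{hom}$. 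Because $q(S)=0$, this group is torsion-free by Roitman's theorem and divisible, hence a $\QQ$-vector space; so it suffices to work with $\QQ$-coefficients, and in particular here proving finite dimensionality in the Roitman sense amounts to proving $\CH_0(S)^{-}=0$.

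Next I would use the description in \cite{Ri}, \S5.2: $S$ is birational to a bidouble cover $\pi:S\to R$ of a smooth rational surface $R$, with Galois group $G=(\ZZ/2\ZZ)^{2}=\{1,i_{1},i_{2},i_{3}\}$, $i_{1}=i$, the three intermediate double covers $S_{j}:=S/i_{j}$ with $S_{1}$ birational to $\wt\Sigma$. Passing everywhere to smooth birational models and working with Chow motives over $\QQ$, the idempotents $p_{\chi}=\tfrac14\sum_{g\in G}\chi(g)[\Gamma_{g}]$, $\chi$ ranging over the characters of $G$, split $h(S)=\bigoplus_{\chi}p_{\chi}h(S)$ with $p_{\chi_{0}}h(S)=h(R)$ ($\chi_{0}$ trivial) and $p_{\chi_{j}}h(S)\oplus h(R)=h(S_{j})$, where $\chi_{j}$ is the nontrivial character with $\ker\chi_{j}=\langle i_{j}\rangle$. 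Since $R$ is rational and $S_{1}$ has trivial $\CH_0$, the summands $p_{\chi_{0}}h(S)$ and $p_{\chi_{1}}h(S)$ have trivial transcendental part; and $i=i_{1}$ acts on $p_{\chi_{j}}h(S)$ by the sign $\chi_{j}(i_{1})$, equal to $+1$ for $j\in\{0,1\}$ and $-1$ for $j\in\{2,3\}$. Taking $\CH_0$ of the $(-1)$-eigen-submotive of $\wt\imath_{*}$ yields a natural isomorphism
$$\CH_0(S)^{-}\otimes\QQ\;\cong\;\bigl(\CH_0(S_{2})_{hom}\otimes\QQ\bigr)\ \oplus\ \bigl(\CH_0(S_{3})_{hom}\otimes\QQ\bigr).$$
By the classification in \cite{Ri}, for each of the four branch-locus types the surfaces $S_{2},S_{3}$ are of Kodaira dimension $\le 1$ (indeed rational or Enriques), so \cite{BKL} applies to them and their degree-zero Chow groups vanish; hence $\CH_0(S)^{-}=0$, which gives (more than) the assertion of Theorem~\ref{theorem2}. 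Combined with Theorem~\ref{theorem 1} applied to $M=X=S$, $Z=\Delta_{S}-\Gamma_{i}$, and $\Alb(S)=0$, this yields the main theorem, that $i$ acts as the identity on $\CH_0(S)_{\alg}$.

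The technical points to watch are: (a) replacing $\Sigma$, $S_{j}$ and the merely generically finite induced covers by genuine smooth models on which the idempotent decomposition is literally valid, which one does by normalizing and resolving the relevant fibre products and using that nodes and quotient singularities are rational so $\CH_0$ is preserved; and (b) checking that $\tau_{j*}$, $\tau_{j}^{*}$ for $\tau_{j}:S\to S_{j}$ realize the displayed decomposition, which holds after inverting $2$ and hence integrally on the $\QQ$-vector space $\CH_0(S)^{-}$. The real obstacle is (c): verifying, case by case over the four branch-locus possibilities, that the two remaining quotients $S_{2},S_{3}$ genuinely have Kodaira dimension $\le 1$, which is where the explicit branch data of \cite{Ri} must be analyzed. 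Should some $S_{j}$ fail to be of non-general type, the fallback is the method of \cite{Voi}: since $p_{g}(S)=0$ the class $[\Delta_{S}-\Gamma_{i}]\in H^{4}(S\times S,\QQ)$ is algebraic, so $\Delta_{S}-\Gamma_{i}=\Gamma_{\alg}+\Gamma_{0}$ with $\Gamma_{\alg}$ a sum of products of curves (acting as $0$ on $\CH_0$) and $\Gamma_{0}$ cohomologically trivial; spreading $\Gamma_{0}$ over the moduli family of \cite{Ri} and applying Voisin's decomposition-of-the-diagonal principle shows that fibrewise, up to torsion, $\Gamma_{0}$ is supported on $D\times S\cup S\times D'$ for curves $D,D'$, whence $(\Delta_{S}-\Gamma_{i})_{*}$ factors through $\CH_0$ of a curve and is finite dimensional in the Roitman sense.
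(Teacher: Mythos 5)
Your route is genuinely different from the paper's. The paper follows Voisin's Prym--fibration argument from \cite{Voi}: for a general point $(s_1,\dots,s_{g-1})\in S^{g-1}$ the antiinvariant cycle $\sum_l s_l-\sum_l i(s_l)$ is supported on the Prym variety of $\wt{C_s}\to C_s$, where $C_s$ is the unique curve of the very ample system $|L|$ on the Enriques quotient through the images of the $s_l$; a dimension count, using that the branch components are rational or elliptic, shows the induced map $S^{g-1}\to\bcP(\wt{\bcC}/\bcC)$ has positive-dimensional fibres, which lets one trade $\Gamma_*(S^{g-1})$ for $\Gamma_*(S^{g-2})$ and induct down to a finite-dimensional image. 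Your opening reduction, by contrast, is sound and worth keeping: $\sigma^*\sigma_*=1+i_*$ together with $\CH_0(\wt\Sigma)_{hom}=0$ (from \cite{BKL}, since the quotient is birational to an Enriques surface) does give $\CH_0(S)^-=\CH_0(S)_{hom}$, and via Theorem \ref{theorem 1} the finite dimensionality asserted in Theorem \ref{theorem2} is then equivalent to the vanishing of this group.

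The core of your argument, however, has two genuine gaps. First, the bidouble-cover structure is only available for the \emph{examples} constructed in \cite{Ri}, \S 5.2; Theorem \ref{theorem2} is stated for every minimal $S$ with the given invariants, involution and branch data, and the classification does not assert that all such $S$ carry a second commuting involution. Your motivic decomposition therefore does not even get started in general. Second --- and you flag this yourself as point (c) --- after the decomposition everything rests on the claim that the two remaining quotients $S_2,S_3$ have Kodaira dimension at most one, so that \cite{BKL} applies to them. You never verify this, and there is no a priori reason it holds: if some $S_j$ is of general type you are reduced to proving a Bloch-type vanishing for $S_j$, which is exactly the kind of statement you set out to prove and makes the argument circular. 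The fallback you sketch (decomposing $\Delta_S-\Gamma_i$ cohomologically and spreading the homologically trivial part over the moduli family) is the strategy of \cite{V}, not of this paper; it requires detailed control of the total space of the family and of the support of the spread cycle that you do not supply. As written, the proposal does not close, whereas the paper's Prym-fibration argument needs only the Enriques quotient and the stated branch data.
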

\begin{proof}
Let $S$ be a surface of general type with $p_g=q=0$ such that the quotient surface $S/i$ is birational to an Enriques surface  Then the ramification locus of the involution is as mentioned above (the first four possibilities).

Take $L$  very ample line bundle on $S'=S/i$ and we know that  $2K_{S'}=0$ in the case the surface is birational to an Enriques surface. By the adjunction formula we have that
$$L^2+L.K=2g-2$$
where $g$ is the genus of a curve in the linear system of $L$. Since $L$ is very ample, the left-hand side of the above equality is positive, so the genus of the curves in the linear system of $L$ is positive and greater than one.  This means that the dimension of $H^0(S',L)$ is $g$ and hence the dimension of the linear system $|L|$ is $g-1$. This is because the irregularity of the surface is zero and $2K_{S'}=0$.

Now for $C$ in the linear system $L$ we have the curve $\wt{C}$ in $S$, the inverse image of $C$ is a ramified double cover of $C$ and it is smooth for general $C$. Therefore, it is ramified along the intersection $B.C$. By the Hodge index theorem $\wt{C}$ is connected. 

Let $\Gamma$ be the correspondence on $S\times S$ given by $\Delta_S-Gr(i)$. We now prove that $$\Gamma_*(S^{g-1})=\Gamma_*(S^{g-2})\;.$$

Let $s=(s_1,\cdots,s_{g-1})$ be a general point of $S^{g-1}$ and let $\sigma_i$ be the image of $s_i$ under the quotient map $S\to S'$. Then a generic $(s_1,\cdots,s_{g-1})$ gives rise to a generic $(\sigma_1,\cdots,\sigma_{g-1})$ in $S'^{g-1}$ and there exists a unique $C_s$ in the linear system $|L|$ such that it contains all the $\sigma_i$. The curve $C_s$ is general in the linear system, so its inverse image under the quotient map is a ramified double cover $\wt{C_s} $ of $C_s$, where $\wt{C_s}$ contains all the points $s_i$. The zero cycle
$$z_s=\sum_l s_l -\sum_l i(s_l)$$
is supported on $J(\wt{C_s})$ and is antiinvariant under $i$, so it is actually in the Prym variety $P(\wt{C_s}/C_s)$ of the ramified double cover $\wt{C_s}\to C_s$, which is a $g-1+\deg(B.C)/2$ dimensional abelian variety and also is the kernel of the norm map from $J(\wt{C_s})$ to $J(C_s)$. So we have the following map
$$(s_1,\cdots,s_{g-1})\mapsto z_s\;,$$
from $S^{g-1}\to \CH_0(S)^{-}$
and it factors through a morphism
$$f:U\to \bcP(\wt{\bcC}/\bcC) $$
in the first two cases as follows.

Case I:

Here $\bcC\to |L|_0$ is the universal smooth curve over the Zariski open set $|L|_0$ which is of dimension $g-1$ and $\wt{\bcC}\to |L|_0$ is its universal double cover, and $\bcP(\wt{\bcC}/\bcC)$ is the corresponding Prym fibration. So the Prym fibration is of dimension $2g-2+\deg(B.C)/2$. Since all the components are $-2$ curves of the ramification locus we have $A_i.C=0$. We have $B'$ a possibly reducible rational curve. 

We have to prove that the morphism $f$ has positive dimensional fibers. Consider the subsets
$$S_{B_i'}=\{(s_1,\cdots,s_{g-1})|\textit{exactly $i$ of $s_1,\cdots,s_{g-1}$ are in $B'$}\}$$

Then we can consider the map from $S_{D_i}\cong S^{g-i-1}\times B'^i$ to $\bcP(\wt{\bcC}/\bcC)$ (as $B'$ is rational, the corresponding Jacobian of its smooth components is trivial).

Given a general point $(s_1,\cdots,s_{g-1})$ the image of it is contained in $P(\wt{C_s}/C_s)$. Now the $s_i$'s which are not in $B'$, they are in the image of the map $\Sym^{g-i-1}\wt{C_s}\to J(\wt{C_s})$. So it is contained in a subvariety inside $J(\wt{C_s})$ as well as in $P(\wt{C_s}/C_s)$ which is of dimension $g-1+\deg(B'.C_s)/2$. So the dimension of the image is less than both $g-i-1$ and $g-1+\deg(B'.C_s)/2$. In particular the dimension of the image is less than $g-i-1$.  So the image of $\wt{C_s}^{g-i-1}\times B'^i$ is of dimension at most $g-i-1$, as it is contained in $P(\wt{C_s}/C_s)$. Therefore the image of  the map from $S^{g-i}\times B'^i$ is in the variety given by $\bcP(\wt{\bcC}/\bcC)$ and it is of dimension at most $g-i-1+g-1=2g-i-2$ (adding the fiber dimension with the dimension of the base). Now we can stratify the general points of $S^{g-1}$ as a point in $S^{g-i-1}\times B'^i$ for some $i$ (ranging from $0$ to $g$). So the dimension of the fiber from $S^{g-1}\to \bcP(\wt{\bcC}/\bcC)$ (which is the image of the map $S^{g-i-1}\times B'^i$) is at least
$$\max\{2g-2-2g+i+2=i\}\;.$$
Now given a curve $C_s$, the intersection number $B'.C_s\geq 1$. So by taking a large power of the line bundle $L$, we have $i>1$. 

Therefore, the fiber above of the map $S^g\to \bcP(\wt{\bcC}/\bcC)$ is always positive. So it contains a curve. Say $F_s$.

So let $z_s$ be the zero cycles as above, supported on the Prym variety $P(\wt{C_s}/C_s)$. Then there exists a curve $F_s$ in $U$ such that for any $(t_1,\cdots,t_{g-1})$ in $F_s$, the cycle $z_t$ is rationally equivalent to $z_s$.  Now $S$ is a minimal surface of general type with $p_g=0$, so we have $5K_S$ is very ample. So the  divisor $\sum_l pr_l^{-1}(5K_S)$ is ample on $S^{g-1}$, call it $D$. Then $D$ meets $F_s$, for all $z_s$. We have the zero cycle $z_s$ supported on $S^{g-k-1}\times C^k$, where $C$ is a smooth curve in the linear system of $5K_S$.

Indeed, consider the divisor
$\sum_i \pi_i^{-1}(5K_{S})$ on the product $S^{g-1}$. This divisor is ample, so it intersects $F_s$, so we get that there exist points in $F_s$  contained in $C\times S^{g-2}$ where $C$ is in the linear system of $5K_{S}$. Then consider the elements of $F_s$ of the form $(c,s_1,\cdots,s_{g-2})$, where $c$ belongs to $C$. Considering the map from $S^{g-2}$ to $A_0(S)$ given by
$$(s_1,\cdots,s_{g-2})\mapsto \sum_j s_j+c-\sum_j i(s_j)-i(c)\;,$$
we see that this map factors through the product of the Prym fibration  and the map from $S^{g-2}$ to $\bcP(\wt{\bcC}/\bcC)$ has positive dimensional fibers, since $i$ is large (here $i$ is the number of co-ordinates which contain points of $B'.C_s$). So, it means that, if we consider an element $(c,s_1,\cdots,s_{g-2})$ in $F_s$ and a curve through it, then it intersects the ample divisor given by $\sum_i \pi_i^{-1}(5K_{S})$, in $S^{g-2}$. Then we have some of $s_i$ is contained in $C$. So iterating this process we get that there exist elements of $F_s$ that are supported on $C^k\times S^{g-k-1}$, where $k$ is some natural number depending on $g$. Genus of the curve $C$ is $46$ depending on $K_S^2=3$. We can choose the very ample line bundle $L$ to be such that $g$ is very large, so $k$ is larger than the genus of $C$. So we have $z_s$ is supported on $C^{46}\times S^{g-k-1}$, hence we have that $\Gamma_*(U)=\Gamma_*(S^{i_0})$, where $i_0=45+g-k$  which is strictly less than $g-1$, since the genus of $C$ is strictly less than $k$. That is we have proven that  for any $(s_1,\cdots,s_{g-1})$ in $U$ in $S^{g-1}$, $z_s$ is rationally equivalent to a cycle on $S^{i_0}$. By using the fact that $\Gamma_*(U)$ is $\Gamma_*(S^{g-1})$, we have proven that $\Gamma_*(S^{g-1})=\Gamma_*(S^{i_0})$.

Case II: In the last three cases there is a presence of an elliptic curve in the ramification locus along with (possibly) some rational curves. So in this case the map from $S^{g-1}\to \CH_0(S)^{-}$ factors through $\bcP(\wt{C}/C)\times E$, here $E$ is the elliptic curve in the ramification locus. In this case the image of $S^{g-1}$ in $\bcP(\wt{C}/C)\times E$ is $S^{g-1-i}\times E$ which is of dimension $2g-i-1$, and the fiber is of dimension $2g-2-(2g-i-1)=i-1$ and since $E.C_s>0$, we have $i>1$ and we can make $i$ very large by taking large powers of the line bundle $L$. Then the same argument follows.

Now by induction that $\Gamma_*(S^i)=\Gamma_*(S^m)$ following \cite{Voi}.

\end{proof}

\begin{theorem}

Let $S$ be a smooth minimal surface of general type with $p_g=0=q$, let $i:S\to S$ be an involution, and $K_S^2=3$. Let $S/i$ be birational to a surface of Kodaira dimension 1 such that the branch locus is the fifth possibility mentioned above. Then the anti-invariant part
$$\CH_0(S)^-=\{z\in \CH_0(S):i_*(z)=-z\}$$
is finite-dimensional in the Roitman sense.
\end{theorem}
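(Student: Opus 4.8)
The plan is to repeat the argument of Theorem~\ref{theorem2}, in the form it takes there when an elliptic curve is present in the ramification locus (``Case~II''), adapting it to the two features in which the present hypotheses differ: the quotient $S'=S/i$ is birational to a \emph{properly elliptic} surface rather than to an Enriques surface, so $2K_{S'}\neq 0$; and the branch locus is $B=E+\sum_i A_i$ with $E$ a smooth curve of genus one, $E^2=-2$, and the $A_i$ being $(-2)$-curves. First I would fix a very ample line bundle on the minimal model of $S'$ (on which the $A_i$ are contracted) and take $L$ to be its pullback to $S'$, so that $L.A_i=0$ and a general $C\in|L|$ meets $B$ only along $E$, in $\deg(E.C)$ points. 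By adjunction $L^2+L.K_{S'}=2g-2$ with $g$ the genus of $C$; since $q(S')=p_g(S')=0$, Riemann--Roch gives $\dim|L|=g-1-L.K_{S'}$, so the exact identity $\dim|L|=g-1$ of the Enriques case is replaced by $\dim|L|<g-1$, and the dimension counts below have to be redone with this shift in mind. Replacing $L$ by a large power, I may assume that $N:=\dim|L|$, the genus $g$, and $\deg(E.C)$ are all as large as we wish.

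For a general $s=(s_1,\dots,s_N)\in S^N$ the images $\sigma_l\in S'$ are general and lie on a unique $C_s\in|L|$; its preimage $\wt{C_s}\subset S$ is a double cover of $C_s$ branched exactly along $E\cap C_s$, smooth for general $s$ and connected by the Hodge index theorem, and the antiinvariant cycle $z_s=\sum_l(s_l-i(s_l))$ lies in the Prym variety $P(\wt{C_s}/C_s)=\ker(\Nm\colon J(\wt{C_s})\to J(C_s))$. Over the open set $|L|_0$ of good curves this produces the universal double cover $\wt{\bcC}\to\bcC\to|L|_0$, the Prym fibration $\bcP(\wt{\bcC}/\bcC)$, and a morphism $f\colon U\to\bcP(\wt{\bcC}/\bcC)\times E$ on a dense open $U\subseteq S^N$ whose composition with the natural map to $\CH_0(S)^-$ is $s\mapsto z_s$. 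The point is then to show that $f$ acquires positive-dimensional fibers: stratifying $S^N$ by the subsets $S^{N-j}\times E^j$ of configurations with exactly $j$ of the $s_l$ lying on the fixed curve $E$, and using that a point of $E$ is fixed by $i$ (so $s_l-i(s_l)=0$, and moving such an $s_l$ within $E$ does not change $z_s$), one finds that the fibers of $f$ along such a stratum have dimension growing with $j$; since $\deg(E.C_s)$ can be made arbitrarily large, $j$ may be taken large, and one concludes that through a general point of $S^N$ there is a curve $F_s\subset U$ along which $z_t$ is rationally equivalent to $z_s$. Here $E$ plays exactly the role played by the rational branch curve $B'$ in Case~I.

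From here the descent proceeds as in Theorem~\ref{theorem2}. Since $S$ is minimal of general type with $K_S^2=3$ and $p_g=0$, the bundle $5K_S$ is very ample and a general $C_0\in|5K_S|$ has genus $46$; the divisor $\sum_l\pi_l^{-1}(5K_S)$ on $S^N$ is ample, so it meets $F_s$, which lets one force a coordinate of some point of $F_s$ onto $C_0$. Iterating, one forces $k$ coordinates onto $C_0^k\times S^{N-k}$; once $k>46$ the part $\sum_{j\le k}(c_j-i(c_j))$ of the cycle, being supported on the two fixed genus-$46$ curves $C_0$ and $i(C_0)$ and factoring through their $\CH_0$, is rationally equivalent to a cycle on a bounded number of points, so $z_s$ lies in the image of $S^{i_0}$ with $i_0=45+N-k<N$. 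Hence $\Gamma_*(S^N)=\Gamma_*(S^{i_0})$, and by descending induction following \cite{Voi} one gets $\Gamma_*(S^N)=\Gamma_*(S^m)$ for an $m$ independent of $N$. Since for $N$ large every class of $\CH_0(S)^-$ is of the form $z_s$, this gives $\CH_0(S)^-\subseteq\Gamma_*(S^N)=\Gamma_*(S^m)$, so $\CH_0(S)^-$ is finite-dimensional in the Roitman sense (take $W=S^m$ with correspondence $\sum_l\pi_l^{*}\Gamma$); combined with Theorem~\ref{theorem 1} and $q=0$ this also yields $i_*=\id$ on $A_0(S)$.

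The main obstacle is exactly the positive-dimensional fiber step: one must check that the stratification estimate survives the fact that here $\dim|L|=g-1-L.K_{S'}$ rather than $g-1$, keeping careful track of which components of $B$ (only $E$, since $A_i.C=0$) ramify $\wt{C_s}$ and hence of the dimension of the Prym, and one must ensure that forcing $j$ points onto $E$ and then $k$ points onto a fixed curve in $|5K_S|$ is compatible with remaining in the generic locus where $\wt{C_s}$ is smooth and connected. Once these dimension counts are in place, the descent and the appeal to Theorem~\ref{theorem 1} are routine.
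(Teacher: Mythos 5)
Your proposal follows essentially the same route as the paper: compute $\dim|L|$ via Riemann--Roch with the correction $n=L.K_{S'}>0$ coming from the properly elliptic quotient, factor the map $s\mapsto z_s$ through the Prym fibration times the elliptic branch curve $E$, obtain positive-dimensional fibers by making $\deg(E.C)$ large via powers of $L$, and then run the same $5K_S$-descent and induction as in Theorem~\ref{theorem2}. The only cosmetic difference is that you exhibit the positive-dimensional fibers directly (moving points along the pointwise-fixed ramification curve leaves $z_s$ unchanged), whereas the paper bounds the dimension of the image of $S^{g-n+m}$ in $\bcP(\wt{\bcC}/\bcC)\times E$; these amount to the same count.
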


\begin{proof}
Let $S'$ be the minimal desingularization of the quotient surface $S/i$. According to the assumption of the theorem $S'$ is of Kodaira dimension $1$ and hence has an elliptic surface with geometric genus zero. The canonical bundle of the elliptic surface is 
$$K_{S'}=p^*(K_B+D)$$
where $D$ is divisor on $B$ the base of the elliptic surface $p:S'\to B$. The linear system of $D$ is base point free. Let $L$ be a very ample line bundle on $S'$. By the adjunction formula we have 
$$L^2+L.K_{S'}=2g-2$$
Now $$L.K_{S'}=L.p^*(K_B+D)$$
Here $L.p^*(D)>0$ as $D$ is an effective divisor. Since the irregularity of $S'$ is zero, suppose that $p$ has a section in that case $B$ is $\PR^1$. Then $K_{\PR^1}=-2P$ for a point $P$ in $B$. By construction of elliptic surfaces $deg(D)>2$ so we have $L. p^*(K_B+D)>0$ and say it is equal to $n$. Then by Riemann-Roch 
$H^0(C,L|_C)\geq 2g-2-g+1-n=g-1-n$. So, the dimension of $H^0(S',L)\geq g-n$. Let this dimension be $g-n+m$, here $m\geq 0$.
Then we can consider $S^{g-n+m}$, and any point in general position (coordinately distinct) on $S^{g-n+m}$ is on a unique curve $\wt{C}$, such that $C\in |L|$. Then according to the possibility of the ramification locus the map $\wt{C}\to C$ is branched along $C.E$, where $E$ is an elliptic curve on $S'$ with self-intersection $-2$. 

We claim that the map $S^{g-n+m}\to \CH_0(S)^-$ factors through the
$\bcP(\wt{C}/C)\times E$ as in the previous theorem. The image of this map has dimension $2g-2n+2m-i+1$, where $i$ is the number of points on $C.E$. We can make this $i$ large by taking a large power of the line bundle $L$. Then the fibers of the map 
$$S^{g-n+m}\to \bcP(\wt{C}/C)\times E$$ are of dimension $i-1$. Since $i>>1$ we have that the fiber is of positive dimension.

Then we argue as in the previous theorem \ref{theorem2}.
\end{proof}

By the classification of surfaces of general type with $p_g=q=0$ and $K^2=3$ with an involution (such that the bicanonical map is not composed with the involution) according to \cite{Ri}, we see that the quotient surface is birational to an Enriques surface, or it is a rational surface or it is of Kodaira dimension one. By applying the above theorem, we get:

\begin{theorem}
For all surfaces of general type with $p_g=q=0$ and $K^2=3$ with an involution such that the bicanonical map is not composed with the involution and the quotient is birational to an Enriques surface or to a surface of Kodaira dimension one,  the involution acts as the identity on the group of algebraically trivial zero cycles modulo rational equivalence.
\end{theorem}

According to \cite{Ri} [section 5.2] we can construct examples of such surfaces starting from the bi-double cover constructions of a rational  surface.

\end{document}